\newtheorem{prop}{Proposition}
\newtheorem{claim}{Claim}
\newtheorem{rem}{Remark}
\newtheorem{defi}{Definition}
\newtheorem{lemma}{Lemma}
\newtheorem{theo}{Theorem}
\newcommand{\sgn}{\text{sgn}}
\newcommand{\smin}{\setminus}
\tikzset{main node/.style={circle,fill=blue!40,draw,minimum size=1.2cm,inner sep=0pt},}
\tikzset{main node two/.style={circle,fill=blue!40,draw,minimum size=1.5cm,inner sep=0pt},}
\tikzset{small dot/.style={fill=black,circle,scale=1.5}}
\tikzset{small dot two/.style={fill=red,circle,scale=1.5}}
\begin{document}


\title{The infinite information gap between mathematical and physical representations}


\author{Pedro Hack}
\email{pedro.hack@dlr.de}
\affiliation{Ulm University, Germany}
\affiliation{Technical University of Munich, Germany}
\affiliation{German Aerospace Center, Germany}

\author{Daniel A.~Braun}
\email{daniel.braun@uni-ulm.de}
\affiliation{Ulm University, Germany}

\author{Sebastian Gottwald}
\email{sebastian.gottwald@uni-ulm.de}
\affiliation{Ulm University, Germany}

\begin{abstract}
Partial orders have been used to model several experimental setups, going from classical thermodynamics and general relativity to the quantum realm with its resource theories. In order to study such experimental setups, one typically characterizes them via a (numerical) representation, that is, a set of real-valued functions.
In the context of resource theory, it is customary to use \textbf{mathematical} representations, i.e. a set of \textbf{measurement outcomes} which characterize the achievable transitions within the experimental setup.
However, in line with the minimum energy and maximum entropy principles in classical mechanics and thermodynamics, respectively, one would expect an optimization interpretation for a representation to be called \textbf{physical}. More specifically, a physical representation could consist of a set of competing \textbf{optimization principles} such that a transition happens provided they are all optimized by it. Somewhat surprisingly, we show that this distinction can result in an \textbf{infinite information gap}, with some partial orders having mathematical representations that involve a finite amount of information and requiring infinite information to build a physical representation. We connect this phenomenon with well-known resource-theoretic scenarios like majorization, and develop notions of  partial order dimension that run in parallel to the representations that we consider. Our results improve on the classification of preordered spaces in terms of real-valued functions.
\end{abstract}

\providecommand{\keywords}[1]
{
  \small	
  \textbf{\textit{Keywords---}} #1
}

\keywords{one, two, three, four}

\maketitle

\section{Introduction}

Assume $(X,\preceq)$ is some experimental context \cite{sorkin1991spacetime,lieb1999physics,giles2016mathematical,chitambar2019quantum,surya2019causal}, where the \textbf{base} set $X$ is the set of possible states that our system can take and $\preceq$ determines an \textbf{ordering}, that is, which transition between pairs of elements $x,y \in X$ we can perform within this experimental setup, with $x \preceq y$ denoting that we can indeed transition from $x$ to $y$. A simple example of such an experimental setup is the \textbf{2D gravity ordering}, that is, the free fall of objects with null initial velocity under gravity $(\mathbb R^2,\preceq_g)$, where $X \equiv \mathbb R^2$ and a transition from $(x,y) \in \mathbb{R}^2$ to $(z,t) \in \mathbb{R}^2$ can be achieved provided they have the same horizontal component $x=z$ and the height of the second state is smaller than that of the first state $y\geq t$, that is,
\begin{equation*}
(x,y) \preceq_g (z,t)  \iff
    x = z \text{ and } y \geq t.
\end{equation*}

A system can have \textbf{reversible} transitions, in the sense that both $x \preceq y$ and $y \preceq x$ hold, which we denote by $x \sim y$. In order to make our setup interesting, we assume our system also has \textbf{irreversible} transitions, in the sense that there exist pairs $x,y \in X$ for which $x \preceq y$ holds while $y \preceq x$ does not, which we denote by $x \prec y$ \footnote{Irreversibility is key in order to determine the $X$ we consider. We could take $X$ to be the set of physical systems in the universe $U$. However, this would mean several of them would only have \textbf{trivial transitions} $\{y \in X | x \preceq y\} = \{x\}$, since they do not interact in a meaningful way with our experimental setup. Hence, we could establish as a criterion for $x \in U$ to be in $X$ that there must be some irreversible transition either starting or ending at $x$.}. In the 2D gravity ordering, any state can only reversible transition to itself, and the irreversible transitions are those that that leave the horizontal coordinate fixed and reduce the vertical coordinate. 

It is natural to assume that our setup $(X,\preceq)$ is a \textbf{partial order} \cite{bridges2013representations}, that is, that it fulfills the following properties:
\begin{itemize}
    \item \textbf{Reflexivity}: One can always transition from any state to itself $x \preceq x$.
    \item \textbf{Transitivity}: One can concatenate two transitions, that is, if $x \preceq y$ and $y \preceq z$, then $x \preceq z$.
    \item \textbf{Antisymmetry}: If one can transition from a state to another, $x \preceq y$, and vice versa, $y \preceq x$, then they are the same state $x=y$. We assume this for simplicity. It will become clear that, regarding the properties we are interested in, this assumption does not make any difference, i.e. our results hold for preorders as well.
\end{itemize}

Partial orders are typically represented via a \textbf{Hasse diagram} \cite{harzheim2006ordered}, where we include a finite set of points that correspond to some subset of the states $H \subseteq X$ and a line joining a lower point $x$ to an upper point $y$ is included provided $x \prec y$ \footnote{Hasse diagrams are typically defined for finite $X$ and, in these cases, one takes $H \equiv X$ and has \textbf{the} Hasse diagram of the partial order. However, we are going to consider here $X$ to be infinite almost exclusively. Hence, we talk about \textbf{a} Hasse diagram.}. To avoid having too many lines, Hasse diagrams do not add those that can be constructed by concatenating other lines. We provide a Hasse diagram for the 2D gravity ordering in Figure \ref{fig:gravity}.  

 \begin{figure}[!tb]
\centering
\begin{tikzpicture}
    \node[main node] (1) {$(x,h_0)$};
    \node[main node] (2) [right = 1.5cm  of 1]  {$(y,h_0)$};
    \node[main node] (3) [right = 1.5cm  of 2]  {$(z,h_0)$};
    \node[main node] (4) [below = 2cm  of 1] {$(x,h_1)$};
    \node[main node] (5) [right = 1.5cm  of 4] {$(y,h_1)$};
    \node[main node] (6) [right = 1.5cm  of 5] {$(z,h_1)$};

    \path[draw,thick,-]
    (4) edge node {} (1)
    (6) edge node {} (3)
    (5) edge node {} (2)
    ;
\end{tikzpicture}
\caption{A Hasse diagram of the 2D gravity ordering. We assume here $x<y<z$ and $h_0<h_1$.}
\label{fig:gravity}
\end{figure}

It is important to note that our experimental setup does not include any reference to how one may produce a certain state of the system in order to start some experiment. In fact, we assume some of the states of the system are relatively complicated to produce and hence are considered \textbf{resources} \cite{gour2015resource,winter2016operational,chitambar2019quantum}. Since the transitions $\preceq$ are actually at our disposal, we think of them as being \textbf{free operations}, i.e. operations that are relatively simple to perform compared to the production of resources. Now, if we intend to actually use our experimental tools, plus some more complicated protocol that produces resources, it is key to understand what states one can obtain via $\preceq$ provided one is given a certain resource. In the 2D gravity ordering, having an object at a certain height is a resource, since we can use its potential energy to produce work. When left at that height, we do not need to do any work for the object to fall and, hence, we can think of falling as a free operation. To give an example coming from the so-called \textbf{quantum resource theories}, we can consider, in the context of obtaining non-Clifford gates, \textbf{magic states} and Clifford gates as resources and free operations, respectively \cite{howard2017application}.

Before we continue, let us consider two questions:
\begin{itemize}
    \item \textbf{What is a state?} Abstractly, we can consider it as a series of interactions between our experimental devices and the state or \textbf{measurement outcomes}. That is, whatever the state $x \in X$ is, for us it is simply a set of values that determine that we have $x$ at our disposal, and not any other $y \in X$ ($y \neq x$), and what we want to determine is the set of measurement outcomes we can achieve from some initial measurement outcomes. 
    \item \textbf{What kind of properties are we measuring?} These properties should be meaningful for the experimental setup at our disposal, that is, the values that they take relative to each other should not be random but correlated to whether transitions between them are achievable. In this sense, it is natural to consider \textbf{monotones}, i.e. functions $u:X \to \mathbb R$ such that, if $x \preceq y$, then $u(x) \leq u(y)$. In the 2D gravity ordering, \textbf{negative gravitational potential energy} $E_g((x,y)) \equiv -mgy$ is a monotone. 
\end{itemize}

Going back to the numerical determination of the possible transitions, we can use a set of monotones to characterize them. In this regard, it would be enough to have a set of them $(u_i)_i$ such that, if a transition is not possible, then there would be some $u_{i_0} \in (u_i)_i$ that decreases (instead of increasing). In the 2D gravity ordering, a simple set of functions that achieves this is the negative gravitational potential together with the function that associates to a point its horizontal coordinate $\mathcal X ((x,y)) \equiv x$ and one that associates to it the negative of its horizontal coordinate $\mathcal X_{\text{neg}} ((x,y)) \equiv -\mathcal X ((x,y))$, which together allow to determine whether $x=z$ or not:
\begin{equation}
\label{eq: gravity mu}
(x,y) \preceq_g (z,t)  \iff
\begin{cases}
    E_g((x,y)) \leq E_g((z,t)),\\
    \mathcal X ((x,y)) \leq \mathcal X ((z,t)) \text{, and}\\
    \mathcal X_{\text{neg}} ((x,y)) \leq \mathcal X_{\text{neg}} ((z,t)).
    \end{cases}
\end{equation}

While this picture may be satisfactory for instance in the practical context we described before, we are interested here in how this picture is related to more strict sorts of numerical representations that are closer in spirit to the classical interpretations of physical phenomena as \textbf{optimization processes}, like
\begin{itemize}
    \item the \textbf{minimum energy principle} in classical mechanics,
    \item or the \textbf{maximum entropy principle} in classical thermodynamics.
\end{itemize}

\subsection{Contribution}

Our main contributions are the following:
\begin{itemize}
    \item We make a fundamental distinction between numerical representations of experimental setups (Section \ref{sec: math vs phys repre}), depending on whether we require them to be interpretable as optimization principles (\textbf{physical representations}) or we simply require them to characterize the transitions achievable within our setup (\textbf{mathematical representations}).

    \item We show the existence of an infinite \textbf{information gap} (Section \ref{sec: fin math and infinite phys}), that is, we show there are experimental setups that require infinite information to be described as optimization processes via a physical representation, while finite information suffices in the non-optimization case concerning mathematical representations \footnote{We provide partial orders where, despite the existence of finite mathematical representations, finite physical representations do not exist. This should not be confused with previous work on the existence of resource theories without finite mathematical representations \cite{datta2023there}.}. 
    \item We compute the information gap for (finite and infinite) \textbf{majorization} (Section \ref{sec: fin + inf majo}), complementing \cite[Theorem 1]{hack2022disorder}.
    \item We provide an 
    easy-to-check condition under which the information gap vanishes (Section \ref{sec: no info gap}).
    \item We relate the different numerical representations we have considered with notions of \textbf{dimension} for partial orders (Section \ref{sec: order dim}),  introducing two new notions of dimension and showing the existence of an infinite information gap for them.
    \item Our findings also improve on the classification or preordered spaces in terms of real-valued monotones \cite{hack2022representing,hack2022classification}. We visualize our contributions in Figure \ref{fig classi}.
\end{itemize}

\section{Mathematical vs. physical representations}
\label{sec: math vs phys repre}

A \textbf{numerical representation} or \textbf{representation} for simplicity is a set $(u_i)_{i \in I}$ of real-valued functions  $u_i: X \to \mathbb R$ that is used in order to study a transition system $(X,\preceq)$.

We distinguish two types of representations:
\begin{itemize}
    \item In a \textbf{mathematical representation}, we can think of $(u_i)_{i \in I}$ as a set of \textbf{measurement instruments}, in the sense that, for any pair $x,y \in X$, we can determine whether there exists a process within some specific experimental context $\preceq$ connecting $x$ and $y$ by comparing the tuples of \textbf{measurement outcomes} 
    \begin{equation*}
     \left\{(u_i(a),u_i(b))\right\}_{i \in I}.   
    \end{equation*}
     Hence, a mathematical representation has an operational value, in the sense that we expect to identify the set of realizable transitions by using it.
    \item In a \textbf{physical representation}, one would look for a set of optimization principles that explain the behaviour of the system, like the minimum energy principle in classical mechanics and the maximum entropy principle in classical thermodynamics. That is,
    we think of $(u_i)_{i \in I}$ as a set of competing optimization principles such that a transition from $x$ to $y$ happens provided all the optimization principles agree that it should happen. By this, in the spirit of thermodynamic entropy \cite{landau2013statistical}, we mean the following:
    \begin{itemize}
        \item If a transition is \textbf{reversible}, then all optimization functions should remain unchanged.
        \item If a transition is \textbf{irreversible}, then all optimization functions should increase.
        \item If a transition is \textbf{impossible}, then at least one optimization function should decrease.
    \end{itemize}
\end{itemize}

 It should be emphasized that physical representations are also mathematical, since we can use the value of the optimization functions as measurement outcomes and determine the achievable transitions from them. However, we do not require every mathematical representation to be physical \footnote{In fact, we will show that some of them are not physical in Section \ref{sec: fin math and infinite phys}.}.

Examples of mathematical \cite{ok2002utility,evren2011multi,goold2016role,gour2018quantum,sagawa2022entropy} and physical \cite{klimesh2004entropy,turgut2007catalytic,brandao2015second,muller2016generalization,alcantud2016richter,hack2022representing} representations have been considered in both the study of abstract partial orders and quantum resource theories. We focus here on the most prominent ones:

\begin{itemize}
    \item A representation $(u_i)_{i \in I}$ is a \textbf{multi-utility} \cite{evren2011multi} if we have, for all $x,y \in X$, that $x \preceq y \iff u_i(x) \leq u_i(y)$ for all $ i \in I$. Multi-utilities are mathematical representations.
    \item A representation $(u_i)_{i \in I}$ is a \textbf{strict monotone multi-utility} \cite{alcantud2016richter} if it is a multi-utility and, for each $i \in I$, we have that 
    $x \prec y$ implies $u_i(x)< u_i(y)$ for all $ x,y \in X$, where $x \prec y$ stands for $x \preceq y$ and $\neg(y \preceq x)$ \cite{richter1966revealed,peleg1970utility}. Strict monotone multi-utilities are physical representations. This is the case since the extra requirement of the members of the representation assures that maximizing over them is equivalent to maximizing over the partial order itself, that is, they are legitimate optimization principles. 
\end{itemize}

 In the following, we will refer to multi-utilities as mathematical representations and to strict monotone multi-utilities as physical representations. For the 2D gravity ordering, \eqref{eq: gravity mu} is a mathematical representation which is not a physical representation. In fact, as we will show in Section \ref{sec: fin math and infinite phys}, the 2D gravity ordering only admits physical representations consisting of an infinite number of functions.

If we have a mathematical representation that is not a physical representation $(u_i)_{i \in I}$, then there will be some function $u_{i_0}$ for which we can find a region of the state space $R \subseteq X$ such that, when optimized over $R$, $u_{i_0}$ will output \textbf{non-equilibrium states}, i.e., non-optimal states. (See \cite{hack2022disorder} for more details.) Hence, we cannot interpret $(u_i)_{i \in I}$ as a set of competing optimization principles.

 Because of the optimization requirements, the functions within a physical representation are closer to the classical \textbf{thermodynamical entropy}, which has always been considered as an optimization principle \cite{jaynes1957information,lieb1999physics,landau2013statistical}. In fact, in the accompanying paper \cite{hack2022disorder}, we argue how, taking majorization as the \textbf{arrow of time}, physical representations are more legitimate generalizations of the \textbf{second law} of thermodynamics than mathematical representations.

If we strengthen the requirements on entropy \cite{jaynes1957information}, and we require it to provide an \textbf{unambiguous equilibrium state} when optimized over some set (assuming such an optimum exists), we end up with a second sort of physical representation:

\begin{itemize}
    \item A representation $(u_i)_{i \in I}$ is an \textbf{injective monotone multi-utility} if it is a multi-utility and, for each $i \in I$, we have that 
    $u_i(x)=u_i(y)$ implies $x = y$ for all $ x,y \in X$ \cite{hack2022representing}. The injectivity property ensures that the representation is a strict monotone multi-utility and, moreover, it also enforces the uniqueness of equilibrium states.
\end{itemize}

In the following, we will refer to injective monotone multi-utilities as injective physical representations. Injective monotone multi-utilities will only appear later on when connecting numerical representations with order dimension.

Given some partial order, a key question is:
\begin{itemize}
    \item What is the \textbf{minimal} number of functions we can have in a specific sort of representation?
\end{itemize}
The main results of this work will address this question, showing somewhat surprising jumps in minimality when slightly changing the requirements on the representations. 

\subsection{Examples}

Before going further, let us give a few examples of representations:
\begin{itemize}
    \item Negative gravitational potential energy $E=-mgh$ constitutes a single-function mathematical and physical representation in the context of one-dimensional free fall in classical mechanics.  
    \item Entropy constitutes a single-function mathematical and physical representation in the context of classical thermodynamics \cite{jaynes1957information,landau2013statistical,lieb1999physics}.
    \item Majorization $(\mathbb P_\Omega^{\downarrow},\preceq_m)$ \cite{arnold2018majorization,nielsen1999conditions,nielsen2002introduction,lostaglio2019introductory} is a partial order defined on $\mathbb P_\Omega^{\downarrow}$, the set of probability distributions on a finite set $\Omega$ whose components are non-increasingly ordered. The ordering is defined as follows:
\begin{equation}
\label{majo}
\begin{split}
    &p \preceq_m q \ \iff \ u_i(p) \leq u_i(q) \text{ for } i=1,..,|\Omega|-1, \\
    &u_i(p) \equiv \sum_{n=1}^{i} p_n.
    \end{split}
\end{equation}
The definition of majorization \eqref{majo} is through a mathematical representation that is not physical (see \cite{hack2022disorder}). We will address the minimality of mathematical and physical representations in Section \ref{sec: majo}.
\end{itemize}

A non-physically-motivated well-know partial order is the power set under set inclusion:

\begin{itemize}
    \item Given some (for simplicity) finite set $A=\{1,\dots,n\}$, the \textbf{set-inclusion power set} $(2^A,\preceq_{2^A})$ is a partial order that has the power set $2^A$ as base set and set inclusion
\begin{equation*}
x \preceq_{2^A} y \iff x \subseteq y
\end{equation*}
 as order relation.
\end{itemize}

We show the Hasse diagram of a set inclusion power set with $A \equiv \{1,2\}$ in Figure \ref{fig:set-inclusion}, and discuss the minimal physical and mathematical representations of $(2^A,\preceq_{2^A})$ for general $A$ in Appendix \ref{sec: lemma power set}.

 \begin{figure}[!tb]
\centering
\begin{tikzpicture}
    \node[main node] at (0,-2) (1) {$\emptyset$};
    \node[main node] at (-2,0) (2)  {$\{1\}$};
    \node[main node] at (2,0) (3)  {$\{2\}$};
    \node[main node] at (0,2) (4) {$\{1,2\}$};

    \path[draw,thick,-]
    (1) edge node {} (2)
    (1) edge node {} (3)
    (2) edge node {} (4)
    (3) edge node {} (4)
    ;
\end{tikzpicture}
\caption{The Hasse diagram of the set-inclusion power set with $A \equiv \{1,2\}$.}
\label{fig:set-inclusion}
\end{figure}

\section{Finite mathematical and infinite physical representations}
\label{sec: fin math and infinite phys}

The difference between mathematical and physical representations does not seem to be large. It is for instance known \cite{alcantud2016richter} that, for any partial order, there exists a countable mathematical representation if and only if there exists a countable physical representation. However, as we show in the following theorem via the 2D gravity ordering, this result breaks at the finite level. 

\begin{theo}
\label{thm: gravity}
There exist partial orders with finite mathematical representations and countably infinite minimal physical representations. In fact, the 2D gravity ordering is such a partial order.
\end{theo}

We show Theorem \ref{thm: gravity} in Appendix \ref{sec: gravity}. Theorem \ref{thm: gravity} shows that one can have an experimental setup whose transitions can be predicted in a finite manner but cannot be interpreted as a series of competing optimization principles without an infinite amount of information. Importantly, we have shows that this happens in a very basic experimental setup (the 2D gravity ordering). We show the same happens for majorization in Section \ref{sec: majo}. 

The 2D gravity ordering has a minimal mathematical representation consisting of three functions (see Appendix \ref{sec: gravity}). Moreover, a mathematical representation consisting of one function is a physical representation as well. It remains then to establish a sharp lower bound on the mathematical representations for which Theorem \ref{thm: gravity} holds. We do so in the following theorem:

\begin{theo}
\label{finite geo infinite deb}
There exist partial orders with minimal mathematical representations consisting of two functions and countably infinite minimal physical representations. 
\end{theo}

  We show Theorem \ref{finite geo infinite deb} in Appendix \ref{sec: proof main theo}.

\section{Majorization}
\label{sec: fin + inf majo}

\subsection{Finite Majorization}
\label{sec: majo}

The construction in Theorem \ref{finite geo infinite deb} is abstract, but it actually manifests itself in well-known quantum resource theories, like majorization. In fact, by finding an \textbf{order isomorphism} \footnote{An order isomorphism between a pair of partial orders $(X,\preceq)$ and $(Y,\preceq')$ is a bijection $f:X \to Y$ such that $x \preceq y$ if and only if $f(x) \preceq' f(y)$
 \cite{ore1987theory}.} from a subset of majorization to the construction in Theorem \ref{finite geo infinite deb}, one can show the following \footnote{The use of the order isomorphism relies on the fact that, if $(X,\preceq)$ is a partial order and $(Y,\preceq')$ is the partial order that takes some $Y \subseteq X$ and together with the restriction of $\preceq$ to $Y$, $\preceq'$, then, provided $(Y,\preceq')$ does not have a physical representation consisting of $k$ function, then the same holds for $(X,\preceq)$.}:  

\begin{theo}
\label{dim majo}
If $|\Omega| \geq 3$, then majorization $(\mathbb P_\Omega^{\downarrow},\preceq_m)$ has a minimal mathematical representation consisting of $|\Omega|-1$ functions and a countably infinite minimal physical representation.
\end{theo}

Although the statement regarding the minimal mathematical representation of majorization is quite natural, we have not found a proof or even the statement in the literature.
The proof of Theorem \ref{dim majo} is split between Appendix \ref{sec: proof major thm} and \cite[Theorem 1]{hack2022disorder}. 

\subsection{Infinite Majorization}
\label{sec: infinite majo}

As Theorem \ref{dim majo} seems to suggest, the gap disappears when we consider infinite majorization.
\textbf{Infinite majorization} $(\mathbb P_\infty^{\downarrow},\preceq_m^{\infty})$ \cite{li2013neumann} is a partial order defined on 
\begin{equation*}
\mathbb P_\infty^{\downarrow} \equiv  \left\{(p_n)_{n \geq 1} | 0 \leq p_n \leq 1 \text{, } p_n \geq p_{n+1} \text{, } \sum_{n \geq 1} p_n =1 \right\},
\end{equation*}
the set of probability distributions on a countably infinite set whose components are non-increasingly ordered. The ordering is defined as expected:
\begin{equation}
\label{def: inf majo}
    p \preceq_m^{\infty} q \ \iff \ u_i(p) \leq u_i(q) \text{ for } i \geq 1.
\end{equation}

As we show in Appendix \ref{app: infinite majo}, we have the following statement regarding the representations of infinite majorization:

\begin{theo}
\label{teo: inf majo}
Infinite majorization $(\mathbb P_\infty^{\downarrow},\preceq_m^{\infty})$ has countably infinite minimal mathematical and physical representations. 
\end{theo}

\section{Bridging the gap}
\label{sec: no info gap}

We would like to able to tell, given a partial order with a finite mathematical representation, whether a finite physical representation exists as well provided some conditions are met. This can be achieved through a well-known order denseness property called Debreu separabilty. A partial order $(X,\preceq)$ is \textbf{Debreu separable} \cite{bridges2013representations} provided there exists some countable set $D \subseteq X$ such that, for all $x,y \in X$ such that $x \prec y$, there exists some $d \in D$ such that $x \preceq d \preceq y$. We refer to such a $D \subseteq X$ as a countable \textbf{Debreu dense} subset \cite{bridges2013representations}.

Debreu separability is actually a fundamental property of representations given that, if the partial order is total, then it has a representation consiting of a single function if and only if it is Debreu separable \cite[Theorem 1.4.8]{bridges2013representations}. Despite its key role, it seems like the following equivalence has not been established:

\begin{theo}
\label{classification}
If $(X,\preceq)$ is a Debreu separable partial order and $k \geq 1$, then $(X,\preceq)$ has a minimal mathematical representation consisting of $k$ functions if and only if it has a minimal physical representation consisting of $k$ functions.
\end{theo}

We prove Theorem \ref{classification} in Appendix \ref{sec: debreu thm}, where we also show that the existence of finite physical representations does not ensure Debreu separability.
Moreover, in the restricted case of minimal representations consisting of two functions ($k=2)$, we can extend the equivalence in Theorem \ref{classification} to injective physical representations. Theorem \ref{classification} does not apply to majorization in general, since majorization is not Debreu separable provided $|\Omega| \geq 3$ \cite[Lemma 5]{hack2022representing}.

Theorem \ref{classification} together with Theorem \ref{finite geo infinite deb} and Proposition \ref{finite RP no deb sep} are improvements on the classification or preordered spaces in terms of real-valued monotones \cite{hack2022representing,hack2022classification}. Figure \ref{fig classi} contains a Venn diagram with these contributions. A more exhaustive diagram can be found in \cite{mythesis}. 

\begin{figure}[!tb]
\centering
\begin{tikzpicture}[scale=0.8, every node/.style={transform shape}]
\node[ draw,fill=blue!10,yshift= 0.375cm,text height = 10.75cm, minimum width = 10cm,label={[anchor=south,above=1.5mm]270:\textbf{Preordered spaces}}] (main) {};
\node[ draw,fill=blue!50, text height = 5cm, minimum width = 6cm,yshift=-1cm,xshift=1cm,label={[anchor=south,above=0.1mm]270: \textbf{Debreu separable}}] at (main.center) (non) {};
\node[ draw,fill=blue!30, text height = 6cm, minimum width = 6cm,yshift=0.7cm,xshift=-0.5cm,label={[anchor=north,below=1.5mm]90:\textbf{Finite multi-utility}}] at (main.center) (non) {};
\node[ draw,fill=blue!50, text height = 5cm, minimum width = 5cm,yshift=0.2cm,xshift=0cm,label={[anchor=north,below=1.5mm]90:\textbf{Finite strict}}] at (main.center) (non) {};
\node[ draw, text height = 5cm, minimum width = 5cm,yshift=0.2cm,xshift=0cm,label={[anchor=north,below=4.5mm]90:\textbf{monotone multi-utility}}] at (main.center) (non) {};
\node[ draw,fill=blue!70, text height = 2cm, minimum width = 2.8cm,yshift=-0.8cm,label={[anchor=north,below=1cm]90:\textbf{Utility function}}] at (main.center) (non) {};
\node[ draw, text height = 5cm, minimum width = 6cm,yshift=-1cm,xshift=1cm] at (main.center) (non) {};
\end{tikzpicture}
\caption{Subset of the classification of preordered spaces in terms of real-valued monotones.}
\label{fig classi}
\end{figure}

\section{Order dimension and representations}
\label{sec: order dim}

We can think of the minimal representations of each sort that we are considering as \textbf{measures of complexity} or \textbf{dimension}, in the sense that they tell us the minimal number of numerical functions which are needed in order for a characterization with certain properties to exist. Moreover, all the characterizations we have considered are actually tied to geometry, since each of these functions uses the real line as output space. While this connection to the real numbers is natural in physics in order to interpret these functions as measurement outcomes or optimization principles, the mathematical study of partial order complexity does not necessarily follow this restriction. In this section, we bridge the gap between the mathematical and physical approaches by introducing two new notions of dimension: The Debreu and geometrical dimensions. 

The standard definition of dimension for partial orders is due to Dushnik and Miller \cite{dushnik1941partially}, and it requires some concepts to be introduced first.
If $(X,\preceq)$ is a partial order, we say the partial order $(X,\preceq')$ is an \textbf{extension} of $(X,\preceq)$ if $x \preceq y$ implies $x \preceq' y$ for all $ x,y \in X$ \cite{harzheim2006ordered}.
Moreover, $(X,\preceq')$ is a \textbf{linear extension} of $(X,\preceq)$ if it is an extension and a total partial order \cite{harzheim2006ordered}. A family of linear extensions $(\preceq_i)_{i \in I}$ is called a \textbf{realizer} of $\preceq$ if
\begin{equation*}
    x \preceq y \iff x \preceq_i y \text{ } \forall i \in I \text{ and } x,y \in X.
\end{equation*}
Finally, the \textbf{Dushnik-Miller dimension} of $(X,\preceq)$ is the cardinality of the smallest realizer $(\preceq_i)_{i \in I}$ of $(X,\preceq)$. 

The Ore dimension \cite{ore1987theory,rival2012ordered} is an alternative definition of the Dushnik-Miller which can be shown to be equivalent to it \cite[Theorem 10.4.2]{ore1987theory}. The \textbf{Ore dimension} of  $(X,\preceq)$ is the cardinality of the smallest set $I$
 such that $(X,\preceq)$
 is order-isomorphic to
 $(S,\preceq')$, where $S \subseteq \bigtimes_{i \in I} C_i$ is a subset of the Cartesian product of $I$ chains $(C_i)_{i \in I}$ and $\preceq'$ is the product-induced order with respect to the partial orders $(\preceq_i)_{i \in I}$ of the chains \footnote{The \textbf{product-induced} order $\preceq'$ with respect to a family of partial orders $((X_i,\preceq_i))_{i\in I}$ is the partial defined on the product space $\bigtimes_{i\in I} X_i$ given by $x\preceq' y$ if and only if $x_i \preceq_i y_i$ for all $ i \in I$.}.

Despite the Ore dimension being equivalent to the Dushnik-Miller dimension, it brings the concept closer to mathematical representations. In particular, the minimal physical representation or minimal multi-utility is precisely the Ore dimension if we restrict ourselves to the case $C_i \equiv \mathbb R$ and we take the standard ordering of the real line $\preceq_i \equiv \leq$ for all $i \in I$. Hence, we can think of this as a geometrical version of the Dushnik-Miller dimension, which motivates the following definition: 

\begin{defi}[Geometrical dimension]
\label{geo dim}
If $(X,\preceq)$ is a partial order, then its geometrical dimension is the cardinality of its minimal mathematical representation or multi-utility.
\end{defi}

Although multi-utilities have a long tradition \cite{ok2002utility}, we believe their minimal cardinality was not given a name before. 

The geometrical dimension and the Dushnik-Miller dimensions can be far apart. In fact, as we show in Appendix \ref{sec: dm and geo}, the following holds:
\begin{theo}
\label{teo: dm and geo}
There exist partial orders with finite Dushnik-Miller dimension and uncountably infinite geometrical dimension.
\end{theo}

The gap between these two notions of dimension comes from the disconnection between the Dushnik-Miller dimension and the real line. To overcome this, we can again use the fundamental relation between Debreu separability and the real line to introduce a new notion of dimension: 
\begin{defi}[Debreu dimension]
\label{deb dim}
If $(X,\preceq)$ is a partial order, then its Debreu dimension is the cardinality of its minimal Debreu separable realizer $(\preceq_i)_{i \in I}$ \footnote{By \textbf{Debreu separable realizer} we simply mean a realizer $(\preceq_i)_{i \in I}$ where each partial order is Debreu separable.}.
\end{defi}

Using \cite[Theorem 1.4.8]{bridges2013representations}, we can directly show the tight connection between the Debreu dimension and minimal physical representations:
\begin{lemma}
\label{deb reali = inj mu}
If $(X,\preceq)$ is a partial order and $I$ is an arbitrary set, then $(X,\preceq)$ has a minimal injective physical representation consisting of $|I|$ functions if and only if its Debreu dimension is $|I|$.
\end{lemma}

Lemma \ref{deb reali = inj mu} allows us to translate the results from representations to order dimensions. In fact, by directly applying \cite[Proposition 6]{hack2022representing} and Theorem \ref{finite geo infinite deb}, we have the following:

\begin{theo}
A partial order has a countable geometrical dimension if and only if it has a countable Debreu dimension. However, there exist partial orders with finite geometrical dimension and countably infinite Debreu dimension.
\end{theo}

\section{Conclusion}

The basic point we are trying to convey here is a distinction between sorts of numerical representations of physical theories and experimental setups, namely mathematical and physical ones. We have argued that the typical numerical representations that have been studied, i.e. mathematical representations, do not correspond to those used in classical physical theories, which are always closely related to optimization. As we show, adding a connection to optimization may have dramatic consequences: The amount of information required to describe a system may become infinite!

Fundamentally, we ask ourselves what we expect from measurement apparatuses, with our main motivation being to gain insight regarding the concept of \textbf{thermodynamical entropy} and to better understand \textbf{generalized entropies} and \textbf{nonextensive statistical mechanics} \cite{ruch1975diagram,ruch1976principle,mead1977mixing,tsallis2009introduction,campion2016entropy}.
 
We conclude by pointing out a couple of questions for further research:
It remains to determine what the minimal representations for \textbf{trumping} or \textbf{catalytic majorization} \cite{daftuar2001mathematical,turgut2007catalytic,elkouss2025finite} are. In this regard, one can use the tools developed here in order to show that the minimal physical representation are infinite. (See \cite[Corollary 2]{hack2022disorder}.) Moreover, it is still unclear how finite injective physical representations are related to non-injective finite physical representations. Establishing a connection between them would be very significant since injective physical representations correspond to the most demanding requirement on entropy funcitons that have been considered before \cite{jaynes1957information}. This would also complete the classification of preordered spaces in terms of real-valued monotones.

\section*{Acknowledgements}
We thank Gianni Bosi and Andreas Winter for useful feedback.
This research is funded by the European Research Council and the Munich Quantum Valley.

\bibliography{main}

\clearpage

\begin{appendix}

\section{Minimal representations of the set-inclusion power set}
\label{sec: lemma power set}

 We devote this section to show the following: 

\begin{itemize}
    \item If $A \equiv \{1,\dots,n\}$, then the minimal mathematical and physical representations of $(2^A,\preceq_{2^A})$ consist of $n$ functions.
\end{itemize}

We begin by showing the results for mathematical representations. The canonical mathematical representation \cite{evren2011multi} would be $(u_x)_{x \in 2^A}$, where
    \begin{equation*}
    u_x(y) \equiv
    \begin{cases}
    1 \text{ if } x \preceq_{2^A} y,\\
    0 \text{ otherwise,}
    \end{cases}
\end{equation*}
However, $(u'_x)_{x \in A}$ with
  \begin{equation*}
    u'_x(y) \equiv
    \begin{cases}
    1 \text{ if } x \in y,\\
    0 \text{ otherwise,}
    \end{cases}
\end{equation*}
is also a multi-utility and it consists of fewer functions. In order to show that there cannot be mathematical representations with less than $n$ functions, it is sufficient to find some $(B,\preceq_B) \subseteq (2^A,\preceq_{2^A})$, where $\preceq_B$ is the restriction of $\preceq_{2^A}$ to $B \subseteq 2^A$, that is order isomorphic to the partial order $(S_0,\preceq_{S_0})$, where 
\begin{equation*}
    S_0 \equiv \{m \in \mathbb Z | m \text{ or } -m \in \{1,\dots,n\} \} 
\end{equation*}
and $\preceq_{S_0}$ is restriction of the partial order in \cite[Proposition 7]{hack2022classification} to $S_0$. Once this is done, we can follow the proof of \cite[Proposition 7]{hack2022classification} to get the desired result. The set 
\begin{equation*}
    B \equiv \{ \{i\} | i \in A \} \bigcup \{ A \smin \{i\} | i \in A \} 
\end{equation*}
has the desired properties. (We represent $(B,\preceq_B)$ when $n=3$ in Figure \ref{fig: power set}.)

To conclude, we address the result for physical representations. For simplicity, let us just note that, although simpler approaches are also possible, we can obtain a physical representation consist of $n$ functions by following the proof of Theorem \ref{classification}. It is then clear that such a representation is minimal since, if there were one with fewer functions, then it would also be a mathematical representation with fewer functions, and we would reach a contradiction. 

\begin{figure}[!tb]
\centering
\begin{tikzpicture}
    \node[main node] (1) {$A \smin \{1\}$};
    \node[main node] (2) [right = 2cm  of 1]  {$A \smin \{2\}$};
    \node[main node] (3) [right = 2cm  of 2]  {$A \smin \{3\}$};
    \node[main node] (4) [below = 2cm  of 1] {$\{1\}$};
    \node[main node] (5) [right = 2cm  of 4] {$\{2\}$};
    \node[main node] (6) [right = 2cm  of 5] {$\{3\}$};

    \path[draw,thick,-]
    (4) edge node {} (2)
    (4) edge node {} (3)
    (5) edge node {} (3)
    (5) edge node {} (1)
    (6) edge node {} (1)
    (6) edge node {} (2)
    ;
\end{tikzpicture}
\caption{The Hasse diagram of $(B,\preceq_B)$ in Appendix \ref{sec: lemma power set} for $n=3$.}
\label{fig: power set}
\end{figure}

  \section{Proof of Theorem \ref{thm: gravity}}
  \label{sec: gravity}

We begin by showing that the minimal mathematical representation of the 2D gravity ordering, which we denote by $(X,\preceq)$ for simplicity, consists of three functions:

The 2D gravity ordering has a mathematical representation consisting of three functions \eqref{eq: gravity mu}. In order to show that this representation is minimal, we will argue by reduction to the absurd. Let us hence assume that there is a mathematical representation consisting of two function $\{u,v\}$ and consider a couple of points $x,y \in X$ such that $A_x \neq A_y$, where
\begin{equation*}
    A_x \equiv \{ y \in X | \neg \left(x \bowtie y \right)\}.
\end{equation*}
By construction, we can assume that $u(x)<u(y)$ and $v(x)>v(y)$. In fact, we have the following property:

\begin{claim}
\label{claim 3}
If $z \in A_x$ and $t \in A_y$, then $u(z)<u(t)$ and $v(z)>v(t)$.
\end{claim}

\begin{proof}
We note first that we must have $u(x) < u(t)$ and $v(x)>v(t)$ for all $t \in A_y$. To see this, we can assume that $t \prec y$ for some $t \in A_y$ ($t \preceq y$ works analogously). In this scenario, if $u(x) \geq u(t)$, and since $v(x)>v(y) \geq v(t)$, then we get that $t \preceq x$. This contradicts the fact that $t \bowtie x$. We conclude hence that $u(x) < u(t)$. 

To finish the proof, we show that, given some $z \in A_x$, we must have $u(z) < u(t)$ and $v(z)>v(t)$ for all $t \in A_y$. To see this, we can assume that $x \preceq z$ ($z \prec x$ works analogously). By monotonicity of $v$, we have that $v(z) \geq v(x) > v(t)$. Hence, we must have $u(z) < u(t)$ to avoid contradiction the fact that $z \bowtie t$.
\end{proof}

To conclude, we note that, by definition of mathematical representation, there must exist some $w \in \{ u,v\}$ and
some uncountable family
$(x_i)_{i \in I} \subseteq X$ for which $A_{x_i} \neq A_{x_j}$ provided $i \neq j$ such that, for each $i \in I$, there exists some pair $a_{x_i},b_{x_i} \in A_{x_i}$ fulfilling $a_{x_i} \prec b_{x_i}$ and $w(a_{x_i})<w(b_{x_i})$. This means that we can pick, for each $i \in I$, some $q_{x_i} \in \mathbb Q$ such that $w(a_{x_i})< q_{x_i} <w(b_{x_i})$. Taking $i \neq j$, and assuming $w(x_i) < w(x_j)$ (the opposite case works analogously), we have that $q_{x_i} \neq q_{x_j}$ since
\begin{equation*}
    q_{x_i} < w(b_{x_i}) < w(a_{x_j}) < q_{x_j}
\end{equation*}
 by construction and Claim \ref{claim 3}. As a result, the map
 \begin{alignat*}{3}
    \phi:\text{ } &I &&\to &&\text{ } \mathbb Q\\
    &i &&\mapsto &&\text{ }q_{x_i}
\end{alignat*}
is injective and $\mathbb Q$ is uncountable. Hence, we have reached the desired contradiction and we conclude the minimal mathematical representations of the 2D gravity ordering consist of three functions.

We conclude by showing that the minimal physical representation of the 2D gravity ordering consists of a countably infinite number of functions:

Since the 2D gravity ordering has finite mathematical representations \eqref{eq: gravity mu}, it has physical representations consisting of a countably infinite number of functions \cite{alcantud2016richter}. In order to show that this representation is minimal, we will argue by reduction to the absurd. Let us hence assume that there is a finite physical representation $\mathcal U \equiv (u_i)_{i=1}^n$. The first thing we ought to notice is the following property:

\begin{claim}
\label{claim: separation}
For each pair $x,y \in X$ with $A_x \neq A_y$, there exists some $u_{i_{xy}} \in \mathcal U$ such that
\begin{equation}
\label{eq: separation}
    u_{i_{xy}}(z) < u_{i_{xy}}(t) \text{ } \forall
 z,t \in X \text{ s.t. } z \in A_x \text{ and } t \in A_y.
\end{equation}
\end{claim}

\begin{proof}
We show this by reduction to the absurd. Assume we have some pair $x,y \in X$ such that $A_x \neq A_y$ and, for each $i \in \{1,\dots,n\}$, there exist some $x_i \in A_x$ and $y_i \in A_y$ such that $u_i(x_i) \geq u_i(y_i)$
and define
\begin{equation*}
    \begin{split}
        &x' \equiv \max_{\preceq} \{x_1,\dots,x_n\}, \\
        &y' \equiv \min_{\preceq} \{y_1,\dots,y_n\},
    \end{split}
\end{equation*}
where $\text{max}_{\preceq}$ ($\text{min}_{\preceq}$) refers to the maximum (minimum) in terms of $\preceq$, which is well-defined since $A_x$ ($A_y$) is a chain and we only consider a finite subset of it.

By monotonicty of $\mathcal U$, we get that $u_i(y') \leq u_i(x')$ for $i=1,\dots,n$. Hence, we have $y' \preceq x'$. However, $x' \bowtie y'$ since $x' \in A_x$ and $y' \in A_y$ and we have reached the desired contradiction.
\end{proof}



Let us denote by $u_i(A_x) \neq u_i(A_z)$ the fact that $u_i$ \textbf{separates} $A_x$ from $A_y$, that is, $u_i$ fulfills either \eqref{eq: separation} or the version of \eqref{eq: separation} where we exchange $x$ for $y$.
As a consequence of Claim \ref{claim: separation}, there exists some function in $\mathcal U$ that separates all the possible pairs in some uncountable set: 

\begin{claim}
\label{claim: uncountable B_i_0}
There exists some $u_{i_0} \in \mathcal U$ such that $B_{i_0}$ is uncountable, where we define for $i=1,\dots,n$
\begin{equation}
    B_{i} \equiv \underset{B \subseteq X : u_i(A_x) \neq u_i(A_z) \forall x,z \in B}{arg\,max} 
    \left\{ |B| \right\}.
\end{equation}
\end{claim}

\begin{proof}
We show this by reduction to the absurd. Assume hence that $B_i$ is countable for $i=1,\dots,n$ and consider some $x \in X$. By assumption, there must exist some function in $\mathcal U$, say $u_1$, and some uncountable set, say $S_1 \subseteq X$, such that:
\begin{itemize}
    \item $s \bowtie t$ for all $s,t \in S_1$,
    \item $u_1(A_x) \neq u_1(A_s)$ for all $s \in S_1$, and
    \item $u_1(A_s) \neq u_1(A_t)$ does not hold for all $s,t \in S_1$.
\end{itemize}
Since the different elements in $S_1$ are not distinguished by $u_1$, and given $y_1 \in S_1$, there must exist some function in $\mathcal U$, say $u_2$, and some uncountable set $S_2 \subseteq S_1 \smin \{y_1\}$ such that:
\begin{itemize}
    \item $u_2(A_{y_1}) \neq u_2(A_s)$ for all $s \in S_2$, and
    \item $u_2(A_s) \neq u_2(A_t)$ does not hold for all $s,t \in S_2$.
\end{itemize}
We can follow this argument recursively for $i=3,\dots,n$, taking $u_i$ as function and $S_i \subseteq S_{i-1} \smin \{y_{i-1}\}$ for some $y_{i-1} \in S_{i-1}$.

To conclude, and since $S_n$ is uncountable, we can pick two elements $s,t \in S_n$, $s \neq t$. By construction, we have that $s \bowtie t$ and $u_i(A_s) \neq u_i(A_t)$ does not hold for all $i \in \{1,\dots,n\}$. This contradicts Claim \ref{claim: separation} and concludes the proof.
\end{proof}






To conclude, let us consider $B_{i_0}$ from Claim \ref{claim: uncountable B_i_0} and note that, for each $x \in B_{i_0}$, we can pick some $y_x \in A_x$ and some $q_x \in \mathbb Q$ such that $u_{i_0}(y_x) < q_x <u_{i_0}(x)$. By construction of $B_{i_0}$, we have that $q_x \neq q_z$ provided $x,z \in B_{i_0}$ and $x \neq z$. Hence, the map
 \begin{alignat*}{3}
    \phi:\text{ } &B_{i_0} &&\to &&\text{ } \mathbb Q\\
    &x &&\mapsto &&\text{ }q_{x}
\end{alignat*}
is injective, $\mathbb Q$ is uncountable and we have reached a contradiction. This concludes the proof.

\begin{rem}
If we consider the same ordering, buy we only allow a countable set of horizontal components, then Theorem \ref{thm: gravity} does not hold. In fact, we end up with an ordering that has minimal mathematical and physical representations consisting of two functions.
\end{rem}


\section{Proof of Theorem \ref{finite geo infinite deb}}
\label{sec: proof main theo}

We consider the partial order $(X,\preceq)$, where we take the set $X \equiv \mathbb{R}\setminus \{0\}$ equipped with the partial order $\preceq$ defined by
\begin{equation}
\label{eq: ex thm 1}
    x \preceq y \iff
    \begin{cases}
    |x| \leq |y| \text{, and } \\ \sgn(x) \leq \sgn(y)
    \end{cases}
\end{equation}
for all $ x,y \in X$, where $|\cdot|$ is the absolute value function and $\sgn(\cdot)$ is the sign function, that is,
  \begin{equation*}
    \sgn(x) \equiv
    \begin{cases}
    1 \text{ if } x \geq 0,\\
    0 \text{ otherwise.}
    \end{cases}
\end{equation*}
A representation of $(X,\preceq)$ can be found in Figure \ref{fig:counterex}.

 \begin{figure}[!tb]
\centering
\begin{tikzpicture}
    \node[main node] at (0,-2) (1) {$-x$};
    \node[main node] at (-2,0) (2)  {$x$};
    \node[main node] at (2,0) (3)  {$-y$};
     \node[main node] at (-2,2) (4)  {$y$};
    \node[main node] at (2,2) (5)  {$-z$};
    \node[main node] at (0,4) (6) {$z$};

    \path[draw,thick,-]
    (1) edge node {} (2)
    (1) edge node {} (3)
    (2) edge node {} (4)
    (3) edge node {} (4)
    (3) edge node {} (5)
    (4) edge node {} (6)
    (5) edge node {} (6)
    ;
\end{tikzpicture}
\caption{A Hasse diagram of the partial order constructed in Theorem \ref{finite geo infinite deb}. We assume here that $0<x<y<z$.}
\label{fig:counterex}
\end{figure}

We begin by noting that the right hand side of \eqref{eq: ex thm 1} is a minimal mathematical representation, since the partial order is not total. We also know \cite{alcantud2016richter} that the minimal physical representation of $(X,\preceq)$ is at most countably infinite. 

To complete the proof, we show by reduction to the absurd that no finite physical presentations exist, which means the minimal physical representation of $(X,\preceq)$ is countably infinite. Let us assume, hence, that there exists some $N< \infty$ such that $(v_i)_{i=1}^N$ is a physical representation. If this is the case, then the following should hold:

\begin{claim}
\label{claim I}
 For each $0<x \in X$ there exists some $i_x$, $1\leq i_x\leq N$, such that $v_{i_x}(x) \leq v_{i_x}(-y)$ for all $x<y \in X$.
\end{claim}

\begin{proof}
 We show this by reduction to the absurd. To do so, fix some $0<x \in X$ for which we assume the result does not hold and define, for $1\leq i\leq N$,
\begin{equation*}
\begin{split}
    &A_i  \equiv \{y \in X| x< y \text{ and } v_i(x) \leq v_i(-y) \},\\
    &x_i \equiv
    \begin{cases}
    \inf A_i \text{ if } A_i \neq \emptyset,\\
    \infty \text{ otherwise,}
    \end{cases}
\end{split}
\end{equation*}
where \textbf{inf} denotes the infimum in the $\leq$ sense.

By hypothesis, we have that $x < x_i$ for $1 \leq i \leq N$ since, otherwise, we would reach a contradiction and the proof would be finished.
Thus, we can pick some $z \in X$ such that 
\begin{equation*}
x < z < \min \{x_1,..,x_N\}.    
\end{equation*}
Thus, we have that $v_{i}(-z) < v_{i}(x)$ for $1 \leq i \leq N$ and, by definition of physical reresentation, we get that $-z \prec x$. This contradicts the fact that $-z \bowtie x$ (which holds since $x < z$) and concludes the proof.
\end{proof}

Since $N$ is finite, there must exist some $i_0$, $1 \leq i_0 \leq N$, for which Claim \ref{claim I} holds for an uncountable subset of strictly positive elements $X_0 \subseteq X$. Since $v_{i_0}$ is a strict monotone, we can pick, for each $x \in X_0$, some $q_x \in \mathbb Q$ such that $v_{i_0}(-x) < q_x < v_{i_0}(x)$. Now, if we consider $x,y \in X_0$ such that $x < y$, then we have that $q_x \neq q_y$ since
\begin{equation*}
    q_x < v_{i_0}(x) \leq v_{i_0}(-y) < q_y
\end{equation*}
 by construction. As a result, the map
 \begin{alignat*}{3}
    \phi:\text{ } &X_0 &&\to &&\text{ } \mathbb Q\\
    &x &&\mapsto &&\text{ }q_x
\end{alignat*}
is injective and $\mathbb Q$ is uncountable. Hence, we have reached the desired contradiction that concludes the proof.

\section{Proof of Theorem \ref{dim majo}}
\label{sec: proof major thm}

The statement regarding the minimal physical representation is shown in the accompanying paper \cite[Theorem 1]{hack2022disorder}. The one regarding the minimal mathematical representations is stated in the following lemma:

\begin{lemma}
\label{geo d-m dim majo}
If $|\Omega| \geq 2$, then $(\mathbb P_\Omega^{\downarrow},\preceq_m)$ has a minimal mathematical representation consisting of $|\Omega|-1$ functions. 
\end{lemma}

We devote the rest of this section to proving Lemma \ref{geo d-m dim majo}. Let us first consider the simplest cases:
\begin{itemize}
    \item If $|\Omega|=2$, the result follows by definition \eqref{majo}.
    \item  If $|\Omega|= 3$, then $(\mathbb P_\Omega^{\downarrow},\preceq_m)$ has a mathematical representation consisting of two functions by definition \eqref{majo} and it does not have a representation with a single function since it is not total.
\end{itemize}

 If $|\Omega| \geq 4$, then $(\mathbb P_\Omega^{\downarrow},\preceq_m)$ has a mathematical representation consisting of $|\Omega|-1$ functions by definition \eqref{majo}.
 To conclude, we ought to show that $(\mathbb P_\Omega^{\downarrow},\preceq_m)$ has no mathematical representation consisting of $|\Omega|-2$ functions. 
 In order to do so, it suffices to construct a subset $S \subset P_\Omega^{\downarrow}$ that is order isomorphic to the subset
\begin{equation*}
S_0 \equiv \{m \in \mathbb Z | m \text{ or } -m \in \{1,\dots,|\Omega|-1\} \}
\end{equation*}
 of the partial order in \cite[Proposition 7]{hack2022classification}. We can then follow \cite[Proposition 7]{hack2022classification} to conclude that no mathematical representation consisting of $|\Omega|-2$ functions exists.

For simplicity, we conclude by sketching the construction of $S$:

We begin taking some distribution $p_1 \in P_\Omega^{\downarrow}$ such that $(p_1)_i>(p_1)_{i+1}>0$ for all $i$ such that $1 \leq i <|\Omega|$. $p_1$ is then used to sequentially define the set of distributions 
\begin{equation*}
    S \equiv (p_i,q_i)_{i=1}^{|\Omega|-1},
\end{equation*}
 where each distribution is defined from the previous one by simple adding or subtracting some probability mass at the appropriate component in order to fulfill the following relations:
\begin{widetext}
\begin{equation}
\label{eq: order iso}
\begin{split}
   u_1(p_1) &> \dots >u_1(p_{n-1})>u_1(q_n)>u_1(p_n)>u_1(q_1)>\dots> u_1(q_{n-1}), \\
  u_2(p_n) &> u_2(p_1)>\dots >u_2(p_{n-2})>u_2(q_{n-1})>u_2(p_{n-1})>u_2(q_n)>\\
  &>u_2(q_1)>\dots >u_2(q_{n-2}), \\
   &\vdots\\
   u_n(p_2) &> \dots >u_n(p_{n})>u_n(q_1)>u_n(p_1)>u_n(q_2)>\dots> u_n(q_{n}),
\end{split}
\end{equation}
\end{widetext}
where each restriction is obtained from the previous one by permuting the subindexes and we use $n \equiv|\Omega|-1$ for commodity.
It is direct to check the order isomorphism between $S$ and $S_0$ from \eqref{eq: order iso}. This concludes the proof.

\section{Proof of Theorem \ref{teo: inf majo}}
\label{app: infinite majo}

We can prove the statement regarding mathematical representations by reduction to the absurd. By definition \eqref{def: inf majo}, we have a countably infinite mathematical representation. Let us assume there exists a finite mathematical representation $(u_i)_{i=1}^k$ for $(\mathbb P_\infty^{\downarrow},\preceq_m^{\infty})$, and note that there is a natural order isomorphism between the subset
\begin{equation*}
    \{p \in \mathbb P_\infty^{\downarrow} | p_{n} = 0 \text{ for } n \geq k+3\}
\end{equation*}
with the induced ordering and $(\mathbb P_{\Omega}^{\downarrow},\preceq_m)$ with $|\Omega|=k+2$. Hence, $(u_i)_{i=1}^k$ is a mathematical representation for $(\mathbb P_{\Omega}^{\downarrow},\preceq_m)$ as well. However, the minimal mathematical representation of $(\mathbb P_{\Omega}^{\downarrow},\preceq_m)$ consists of $k+1$ functions by Lemma \ref{geo d-m dim majo}. Thus, $(u_i)_{i=1}^k$ is not a mathematical representation of infinite majorization.

Since we have a countably infinite mathematical representation, we also have a countably infinite physical representation by \cite{alcantud2016richter}.

\section{Debreu separability and finite representations}
\label{sec: debreu thm}

\subsection{Proof of Theorem \ref{classification}}

Since the converse holds by definition and without requiring Debreu separability on $(X,\preceq)$, we simply show that a finite physical representation $(v_i)_{i = 1}^k$ can be constructed from a finite multi-utility $(u_i)_{i = 1}^k$ provided $(X,\preceq)$ is Debreu separable. In order to do so, in analogy with \cite[Proposition 10]{hack2022classification}, it suffices to show that
\begin{equation*}
    I_{i} \equiv \{r \in \mathbb{R}| \exists x,y \in X \text{ such that } x,y \in u_i^{-1}(r) \text{ and } x \prec y\}
\end{equation*}
 is countable for $i=1,\dots,k$. Fix thus some $i \in \{1,\dots,k\}$ and a countable Debreu dense subset $D \subseteq X$ and note that, for each $r \in I_{i}$, there exists some $d_r \in D$ such that $d_r \in u_i^{-1}(r)$. This is the case since, by definition, there exist $x,y \in X$ such that $x \prec y$ and $x,y \in u_i^{-1}(r)$. We can pick as $d_r$ any $d \in D$ such that $x \preceq d \preceq y$ since
 \begin{equation*}
 r=u_i(x) \leq u_i(d) \leq u_i(y)=r    
 \end{equation*}
 by monotonicity of $u_i$. To conclude, note that the map
 \begin{alignat*}{3}
    \phi:\text{ } &I_i &&\to &&\text{ } D\\
    &r &&\mapsto &&\text{ }d_r
\end{alignat*}
is injective. This implies that $I_i$ is countable and concludes the proof.
 

\subsection{The converse}

As we show in Proposition \ref{finite RP no deb sep}, while Debreu separability makes mathematical and physical representations equivalent, it is not implied by the stronger of them. The same happens for the closely related notion of Debreu upper separability: We say a partial order $(X,\preceq)$ is \textbf{Debreu upper separable} provided there exists some countable \textbf{Debreu upper dense} subset $D \subseteq X$, that is, provided there exists some $d \in D$ fulfilling $x 
\bowtie d \preceq y$ for each pair $x,y \in X$ such that $x \bowtie y$.

\begin{prop}
\label{finite RP no deb sep}
There exist partial orders with finite physical representations
such that every subset that is either Debreu dense or Debreu upper dense is uncountable.
\end{prop}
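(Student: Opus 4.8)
The plan is to exhibit a single preorder that does the job. I would look for a preorder $(X,\preceq)$ on an uncountable ground set that is "almost totally unordered" — so that strict monotone multi-utilities of small (even cardinality one) size exist — but in which the strict part $\prec$ is so sparse that no countable set can be Debreu dense or Debreu upper dense. A natural candidate is $X \coloneqq \mathbb{R}$ with the preorder induced by a single real-valued function that collapses uncountably many points to each value: for instance take $f:\mathbb{R}\to\mathbb{R}$ with $x\preceq y \iff f(x)\le f(y)$, where $f$ is a surjection such that $f^{-1}(r)$ is uncountable for every $r\in\mathbb{R}$ (e.g.\ $f(x,t)=x$ after identifying $\mathbb{R}\cong\mathbb{R}\times\mathbb{R}$ via a bijection, or more concretely $f$ built from a Cantor-type decomposition). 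Then $f$ itself is a strict monotone and $(f)$ is a strict monotone multi-utility of cardinality $1$, giving the first half of the statement essentially for free (one must check $x\prec y \iff f(x)<f(y)$, which holds because $f$ induces $\preceq$).

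The core work is the lower bound on Debreu dense and Debreu upper dense subsets. First I would record that in this preorder, $x\prec y$ holds precisely when $f(x)<f(y)$, and that whenever $x\prec y$ there is a whole uncountable "layer" $f^{-1}(r)$ strictly between them for every $r\in(f(x),f(y))$, while elements of a common layer are equivalent (each $\preceq$ the other) rather than incomparable. The key observation for Debreu denseness: if $D$ is Debreu dense, then for every pair $r<s$ of reals, picking $x\in f^{-1}(r)$ and $y\in f^{-1}(s)$ forces some $d\in D$ with $x\preceq d\preceq y$, i.e.\ $f(d)\in[r,s]$; this alone does not force $D$ uncountable, so I would instead engineer $f$ so that the \emph{order} on the quotient is itself not separable, or — cleaner — use a different target so that the values are a lexicographic line. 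Concretely, replace $\mathbb{R}$ as value space by $(\mathbb{R}^2,\preceq_L)$: let $X\coloneqq \mathbb{R}^2$ and define $\preceq$ by $x\preceq y \iff \pi(x)\preceq_L \pi(y)$ where $\pi$ collapses each vertical line to a point, so the quotient is the lexicographic plane with each point blown up uncountably. Then a Debreu dense $D$ must meet a Debreu dense subset of the lexicographic plane, which is uncountable by the classical fact (cited in the excerpt) that $(\mathbb{R}^2,\preceq_L)$ has no utility function hence no countable Debreu dense set. The same argument handles Debreu upper denseness once we note this preorder has \emph{no} incomparable pairs at all (it is total), so Debreu upper denseness reduces to Debreu denseness — wait, that would make the statement vacuous for that clause, so I must instead keep genuine incomparabilities.

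The fix, and the main obstacle, is to retain incomparable pairs while keeping a \emph{finite} strict monotone multi-utility and destroying countable Debreu (upper) denseness. I would therefore take $X\coloneqq \mathbb{R}^2\setminus\{0\}$-style construction, or more simply two disjoint lexicographic-plane-like blocks $X_1,X_2$ made incomparable to each other, each block collapsed from an uncountable fibration, with the strict monotone multi-utility given by $(v_1,v_2)$ where $v_i$ carries a utility-like real function separating block $i$ from the other and respecting the within-block order (possible because each block, after collapsing, is a lexicographic line, and a lexicographic line \emph{does} admit a strict monotone — indeed an injective-on-$\prec$-jumps real function — even though it has no utility). The hard part is precisely verifying that each $v_i$ can be chosen to be a genuine \emph{strict} monotone on the lexicographic-type factor: one uses that the second coordinate already gives strictness within a vertical fiber and a bounded injection of the first-coordinate real line handles the rest, summing the two with appropriate weights as in \eqref{ok procedure}; this is a routine but delicate check. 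Then Debreu denseness of any $D\subseteq X$ would project to a Debreu dense subset of one of the lexicographic blocks, forcing $D$ uncountable, and Debreu upper denseness is handled because the incomparable pairs $x\bowtie y$ with $x$ in one block and $y$ in the other, with a witness $d$ satisfying $x\bowtie d\preceq y$, force $d$ into the block of $y$ and at a prescribed point of its lexicographic order, again forcing uncountably many witnesses. I expect the bookkeeping of which collapsed lexicographic structure is used, and the explicit formula for the $v_i$, to be where all the real effort lies; the cardinality lower bounds then follow cleanly from the cited non-existence of utilities for the lexicographic plane.
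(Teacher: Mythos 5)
There is a fatal gap at the step you yourself flag as ``routine but delicate'': the claim that a lexicographic line (or a preorder whose quotient is one) admits a strict monotone. It does not. For any \emph{total} order, a Richter--Peleg function is automatically a utility function: if $\neg(x \preceq y)$, then by totality $y \prec x$, hence $v(y) < v(x)$, so $v(x) \leq v(y) \iff x \preceq y$. Since the lexicographic plane has no utility function --- the very fact you invoke to make Debreu dense subsets uncountable --- it has no strict monotone either. Moreover, any strict monotone on one of your collapsed blocks is constant on equivalence classes (if $x \preceq y$ and $y \preceq x$ then $v(x)=v(y)$) and therefore descends to a strict monotone on the lexicographic quotient, which is impossible. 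So your proposed space admits no finite strict monotone multi-utility at all, and the first half of the statement fails for it. The concrete repair you sketch (a bounded increasing function of the second coordinate added with small weight to an increasing function of the first) fails for the same underlying reason: the first-coordinate function would need uncountably many jumps of a fixed positive size.

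The paper's mechanism for destroying countable Debreu (upper) denseness is entirely different and is compatible with strict monotones: it uses \emph{tight covering pairs} rather than lexicographic non-representability. Take $X = \mathbb{R}\setminus\{0\}$ with $x \preceq y$ iff $u_1(x) \leq u_1(y)$ and $u_2(x)\leq u_2(y)$, where $u_1(x)=x$ and $u_2(x)=1/x$ on the positives and $-1/|x|$ on the negatives. Then the positives form an antichain, the negatives form an antichain, and every negative lies strictly below every positive; $\{u_1,u_2\}$ is a finite strict monotone multi-utility. For each $x>0$ the relation $-x \prec x$ has no witness $d$ with $-x \preceq d \preceq x$ other than $d \in \{x,-x\}$ (any other positive is incomparable to $x$, any other negative is incomparable to $-x$), so a Debreu dense set must contain $x$ or $-x$ for every $x>0$; and for $x \bowtie y$ with $x,y>0$, the only $d$ with $x \bowtie d \preceq y$ is $d=y$, so a Debreu upper dense set contains every positive real. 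If you want to salvage your write-up, you should abandon the lexicographic blocks and look for a structure of this kind, where uncountably many mutually incomparable covering pairs force the denseness witnesses to be the endpoints themselves.
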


\begin{proof}
Consider the partial order $(X,\preceq)$, that consists of $X \coloneqq \mathbb R \setminus \{0\}$ equipped with $\preceq$, where 
\begin{equation*}
    x \preceq y \iff u_1(x) \leq u_1(y) \text{ and } u_2(x) \leq u_2(y),
\end{equation*}
with $u_1(x) \equiv x$ and $u_2(x) \equiv 1/x$ for all $x \in X$. See Figure \ref{fig 2} for a representation of $(X,\preceq)$,
and note that $\{u_1,u_2\}$ is a finite strict monotone multi-utility.

Regarding Debreu density, take a Debreu dense subset $D \subseteq X$.
Since $-x \prec x$ holds for all $x >0$, there exists some $d_x$ such that $-x \preceq d_x \preceq x$. By construction, this means that $d_x \in \{-x,x\}$ and, hence, $d_x \neq d_y$. This implies that $D$ is uncountable. Regarding Debreu upper density, take a Debreu upper dense subset $D \subseteq X$. Since the $\preceq$-minimal elements are incomparable, they must all belong to $D$. Since the minimal elements are uncountable, $D$ is uncountable.
\end{proof}

\begin{figure}[!tb]
\centering
\begin{tikzpicture}
    \node[main node] (1) {$x$};
    \node[main node] (2) [right = 2cm  of 1]  {$y$};
    \node[main node] (3) [right = 2cm  of 2]  {$z$};
    \node[main node] (4) [below = 2cm  of 1] {$-x$};
    \node[main node] (5) [right = 2cm  of 4] {$-y$};
    \node[main node] (6) [right = 2cm  of 5] {$-z$};

    \path[draw,thick,-]
    (4) edge node {} (1)
    (4) edge node {} (2)
    (4) edge node {} (3)
    (5) edge node {} (3)
    (5) edge node {} (2)
    (5) edge node {} (1)
    (6) edge node {} (1)
    (6) edge node {} (2)
    (6) edge node {} (3)
    ;
\end{tikzpicture}
\caption{A Hasse diagram of the partial order constructed in Proposition \ref{finite RP no deb sep}. We assume here that $0<x<y<z$.}
\label{fig 2}
\end{figure}

\subsection{Injective physical representations}

A preliminary results regarding the connection between physical representations and injective physical representations is the following:

\begin{itemize}
    \item Any physical representation consisting of two functions is an injective physical representation.
\end{itemize}

We can show this by reduction to the absurd. If we have some partial order $(X,\preceq)$ and some physical representation $\{u,v\}$ that is not injective, then there must exist a pair $x,y \in X$ and a function in the representation, say $u$, such that $x \bowtie y$ and $u(x)=u(y)$. However, if this is the case, and assuming w.l.o.g. that $v(x) \leq v(y)$, the we must have $x \preceq y$, which contradicts the fact that $x \bowtie y$.

This statement together with Theorem \ref{classification} gives us that, if $(X,\preceq)$ is a Debreu separable partial order, then it has a mathematical representation consisting of two functions if and only if it has a physical representation consisting of two injective functions.

\section{Proof of Theorem \ref{teo: dm and geo}}
\label{sec: dm and geo}

To show the result, we consider the \textbf{lexicographic plane} $(\mathbb{R}^2,\preceq_L)$ \cite{debreu1954representation}, where we have
 \begin{equation*}
(x,y) \preceq_L (z,t)  \iff
\begin{cases}
    x < z\\
    x=z  \text{ and } y \leq t
    \end{cases}   
\end{equation*}
for all $ (x,y), (z,t) \in \mathbb{R}^2$.
  Since it is a total order, its Dushnik-Miller dimension is one. However, it has no \textbf{utility function} \cite[Example 1.4.1]{bridges2013representations}, i.e. multi-utility consisting of a single function. Hence, by \cite[Proposition 4.1]{alcantud2016richter}, it cannot have a countable multi-utility and its geometrical dimension is uncountable.

\end{appendix}
\end{document}